\newcommand{\tmop}[1]{\ensuremath{\operatorname{#1}}}
\newtheorem{theorem}{Theorem}[section]
\newtheorem{cor}[theorem]{Corollary}
\newtheorem{lemma}[theorem]{Lemma}
\newtheorem{prop}[theorem]{Proposition}
\theoremstyle{definition}
\newtheorem{conj}[theorem]{Conjecture}
\theoremstyle{plain}
\newtheorem*{theorem*}{Theorem}
\numberwithin{equation}{section}
\newcommand{\mc}[1]{\mathcal{#1}}
\newcommand{\ucp}[2]{\tmop{UCP}(#1, #2)}
\begin{document}
\begin{center}
\end{center}
\title{Korovkin-type properties for completely positive maps}
\author{Craig Kleski}
\address{Department of Mathematics, Miami University, Oxford, OH, USA}
\email{kleskic@miamioh.edu}
\begin{abstract}
Let $S$ be an operator system in $B(H)$ and let $A$ be its
generated $C^{\ast}$-algebra. We give a new characterization of
Arveson's unique extension property for unital completely positive
maps on $S$. We also show that when $A$ is a Type I
C*-algebra,  if every irreducible representation of $A$ is a
boundary representation for $S$, then every unital completely positive map on $A$ with codomain $A''$
that fixes $S$ also fixes $A$.

\thanks{
Thanks to David Sherman, Ken Davidson, and Matthew Kennedy for many
helpful conversations, comments, and suggestions on this topic.}

\end{abstract}
\subjclass[2010]{41A36, 46L07, 46L52, 47A20}
\keywords{operator system, boundary representation, unique extension
  property, Korovkin set, 
completely positive map, Choquet boundary}
\maketitle
\begin{center}
\end{center}

\section{Introduction}

Korovkin-type properties for completely positive maps may be viewed as
an essential ingredient of what Arveson called ``noncommutative
approximation theory''. In \cite{Arveson08a}, he initiated this study
by investigating the rigidity of completely positive maps on
C*-algebras generated by operator systems.  We expand on this
study by characterizing the unique extension property for completely
positive maps with a Korovkin-type theorem.  Then we explore the
extent to which the noncommutative Choquet boundary determines this
rigidity for Type I C*-algebras for completely positive extensions having a
particular but natural codomain.  Finally, we discuss
Arveson's hyperrigidity conjecture and obtain structural information
about Type I C*-algebras generated by operator systems when
every irreducible representation is in the noncommutative Choquet boundary.

Let $X$ be a
compact Hausdorff metrizable topological space and let $C(X)$ be the continuous
complex-valued functions on $X$. Let $M$ be a \emph{function
  space} in $C(X)$; that is, a unital, linear subspace of $C(X)$ that separates
$X$.  Let $K(M)$ be the set $\{\phi\in
M^{\ast}:\phi(1)=1=\|\phi\|\}$. This is a compact convex subset of
$M^{\ast}$ called the \emph{states} of $M$.  Within $K(M)$ are the
evaluation functionals $\tmop{ev}_x$, and using them, we define the
Choquet boundary for $M$:
\begin{align*}
\partial_{M}:=\{x\in X:\tmop{ev}_x\text{ is an extreme point of }K(M)\}.
\end{align*} 
The space $M$ is called a \emph{Korovkin set} if whenever
$\{\phi_n\}$ is a sequence of positive maps from $C(X)$ to itself such
that $\|\phi_n(f)-f\|\to 0$ for all $f\in M$, then $\|\phi_n(f)-f\|\to 0$
for all $f\in C(X)$. The classical Korovkin theorem asserts that $\tmop{span}\{1,x,x^2\}$ is a Korovkin set for $C([0,1])$. 

In \cite{Saskin67}, {\v{S}}a{\v{s}}kin showed the following important connection between
certain Korovkin sets and Choquet boundaries:  a function space $M$ in
$C(X)$ is a Korovkin
set for $C(X)$ if and only if $\partial_M=X$. The
main result of this paper is to obtain a noncommutative version of
this result for a class of separable C*-algebras.

\begin{theorem*}
Let $S$ be a separable concrete operator system generating a Type I
C*-algebra $A$.   If every irreducible representation of $A$ is a boundary
representation for $S$, then for every unital completely positive map
$\psi:A\to A''$ satisfying $\psi(s)=s$ for all $s\in S$, we have
$\psi(a)=a$ for all $a\in A$.
\end{theorem*}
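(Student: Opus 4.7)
The plan is to use the Type I hypothesis to disintegrate the ambient representation $A\hookrightarrow B(H)$ into irreducibles and then apply the boundary representation hypothesis fiberwise.

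I would first reduce to the case where $H$ is separable. Since $A$ is separable, $H$ decomposes as an orthogonal sum $H=\bigoplus_\alpha H_\alpha$ of $A$-invariant separable subspaces. Each projection $p_\alpha$ onto $H_\alpha$ lies in $A'$, and since $\psi(a)\in A''$ commutes with $A'$, each $\psi(a)$ also commutes with $p_\alpha$; in particular $\psi(a)|_{H_\alpha}$ lies in the weak closure of $A$ acting on $H_\alpha$. It therefore suffices to prove $\psi(a)|_{H_\alpha}=a|_{H_\alpha}$ for each $\alpha$, so by replacing $H$ with one such $H_\alpha$ we may assume $H$ itself is separable.

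Now assume $H$ is separable. By the central disintegration theorem for separable Type I C*-algebras, there are a standard measure space $(T,\mu)$, a measurable field of irreducible representations $t\mapsto (\pi_t,H_t)$, and an identification $H\cong \int^\oplus H_t\,d\mu(t)$ under which $a\mapsto \int^\oplus \pi_t(a)\,d\mu(t)$ and $A''=\int^\oplus B(H_t)\,d\mu(t)$. Writing $\psi(a)=\int^\oplus \psi_t(a)\,d\mu(t)$ fiberwise gives maps $\psi_t\colon A\to B(H_t)$, and the goal becomes showing $\psi_t=\pi_t$ for $\mu$-a.e.\ $t$. For each $n$, the amplification $\psi_n$ is positive and disintegrates as $(\psi_t)_n$; since $M_n(A)$ is separable, a single $\mu$-null set $N_n$ controls positivity at level $n$, and the countable union $N=\bigcup_n N_n$ is still null, so off $N$ each $\psi_t$ is UCP. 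Separability of $S$ lets a countable dense subset pick up a further null set off which $\psi_t|_S=\pi_t|_S$. The boundary representation hypothesis then forces $\psi_t=\pi_t$ on all of $A$ off this combined null set, and a final countable-density argument in $A$ yields $\psi(a)_t=a_t$ for every $a\in A$ a.e., hence $\psi(a)=a$ in $A''$.

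The main obstacle is the measure-theoretic bookkeeping: arranging ``positive at every matrix level'', ``fixing $S$'', and ``agreeing with $\pi_t$ everywhere on $A$'' simultaneously outside a single $\mu$-null set, which is precisely what the separability hypotheses on $S$ (hence on $A$ and on every $M_n(A)$) make possible. The Type I assumption is indispensable because it is exactly what guarantees that the fibers of the central disintegration are irreducible and therefore subject to the boundary representation property; for non--Type I algebras the fibers would be merely factor representations, and there would be no direct hook for the hypothesis.
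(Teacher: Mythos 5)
Your overall strategy---disintegrate the representation, push $\psi$ to the fibers, and invoke the unique extension property fiberwise---is the same as the paper's, but the disintegration you invoke does not exist. The central decomposition of a separable representation of a Type I C*-algebra has \emph{factor} fibers, not irreducible ones, and the identity $A''=\int^\oplus B(H_t)\,d\mu(t)$ fails as soon as the representation has multiplicity: for $A=M_2$ acting on $\mathbb{C}^2\otimes\mathbb{C}^2$ as $a\mapsto a\otimes 1$, the center of $A''$ is trivial, the ``disintegration'' is a single fiber which is not irreducible, and $A''=M_2\otimes 1\neq B(\mathbb{C}^4)$. So the step ``the boundary representation hypothesis forces $\psi_t=\pi_t$'' has nothing to act on: the fibers $\pi_t$ you actually get are only quasi-equivalent to irreducibles. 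This is precisely the point where the paper inserts an extra ingredient you are missing: for Type I algebras every factor representation is a multiple of an irreducible, and by Dritschel--McCullough a direct sum (in particular a multiple) of representations with the UEP again has the UEP; hence ``every irreducible is a boundary representation'' upgrades to ``every factor representation has the UEP,'' which is the statement that can be applied to the central fibers. Your closing remark that Type I ``guarantees that the fibers of the central disintegration are irreducible'' mislocates the role of the hypothesis---it is used exactly in this upgrade, not in the disintegration.

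The gap is repairable in two ways. Either follow the paper: keep the central decomposition over $\mathcal{Z}(A'')$, accept factor fibers, and cite the direct-sum stability of the UEP. Or replace the central decomposition by a decomposition over a maximal abelian subalgebra $\mathcal{A}$ of $A'$, whose fibers are irreducible almost everywhere (Dixmier); then $A''\subseteq\mathcal{A}'$ so elements of $A''$ are still decomposable and your measure-theoretic bookkeeping (which is correctly described and is essentially the paper's) goes through---but you must drop the claim that $A''$ equals $\int^\oplus B(H_t)\,d\mu(t)$, since it is in general a proper subalgebra of the decomposable operators. Your reduction to separable $H$ is fine in substance, with the small caveat that after restricting to $H_\alpha$ the representation of $A$ need no longer be faithful, so the separable case must be stated for a ucp map $A\to\rho_\alpha(A)''$ agreeing with $\rho_\alpha$ on $S$ rather than literally for the identity representation; the paper instead handles nonseparable $K$ by summing each piece with a fixed faithful separable representation.
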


The noncommutative Choquet boundary has received a
considerable amount of attention lately.  In \cite{Arveson08},
Arveson proved that every separable operator system has ``sufficiently
many'' boundary representations.  This result was improved in
\cite{Kleski11}, where the author showed that the noncommutative
Choquet boundary is a boundary in the classical sense.  Recently,
Davidson and Kennedy (\cite{DK13}) generalized Arveson's result to the nonseparable
case, settling an influential problem from \cite{Arveson69}.  Despite its long history, many lines of inquiry in noncommutative Choquet theory remain unexplored.  The theory has
connections to other areas of operator algebras, including noncommutative convex sets and peaking phenomena for
operator systems. Korovkin-type properties for completely positive
maps are but a small part of a long list of potential applications.

We establish some definitions, notation, and conventions. Let $H$ be a complex Hilbert space and let $B(H)$ be the bounded
linear operators on $H$. A concrete \emph{operator system} is a unital
self-adjoint linear subspace of $B(H)$, and it serves as a
noncommutative generalization of a function space.  A \emph{completely positive} (cp)
map is a linear map $\phi$ between operator systems $S_1,S_2$ such
that for all $n$, $\phi^{(n)}:M_n(S_1)\to M_n(S_2)$, where
$\phi^{(n)}((s_{ij})):=(\phi(s_{ij}))$, is a positive map. When $\phi$
is unital, it is a \emph{unital completely positive} (ucp) map.  We denote by
$\ucp{S_1}{S_2}$ the set of ucp maps from $S_1$ to $S_2$.  A
\emph{representation} $\pi$ of a C*-algebra $A$ is a
$\ast$-homomorphism from $A$ to $B(K)$ for some Hilbert space $K$. All
representations will be assumed to be nondegenerate in the sense that the closed
span of  $\pi(A)K$ is all of $K$.

A C*-algebra $A$ is \emph{Type I} if $A^{\ast\ast}$ is a Type I von
Neumann algebra.  It is nontrivial that this is equivalent to being a
\emph{GCR} C*-algebra, meaning every irreducible representation
contains nonzero compact operators in its image. Among C*-algebras, the Type I algebras are
distinguished by having a nice representation theory, in
the sense that two irreducible representations are
unitarily equivalent if and only if they have the same kernel.  A wider class of
C*-algebras are those which are \emph{nuclear}; i.e., those
C*-algebras $A$ such that for any C*-algebra $B$,
there is a unique C*-cross norm on the algebraic tensor product of
$A$ and $B$ making it a C*-algebra.

Let $A$ be a C*-algebra and let $B$ be a 
 C*-subalgebra. A \emph{conditional expectation} from $A$ to
 $B$ is a completely positive projection of norm 1.  A
 C*-algebra $A$ is \emph{injective} if for every faithful
 representation $\pi$ of $A$ acting on the Hilbert space $K$, there
 exists a conditional expectation $E:B(K)\to \pi(A)$.  For example,
 when $A$ is a nuclear C*-subalgebra of $B(H)$, then $A''$ is injective. One consequence of this is
 that if $\psi:S\to A''$ is ucp, there is a ucp map
$\tilde{\psi}:A\to A''$ such that $\tilde{\psi}|_{S}=\psi$.

\section{Metric properties of  representations with the UEP}\label{sect2}

Let $S$ be a concrete operator system in $B(H)$ and let $A$ be the
(unital) 
C*-algebra that it generates. When $A$ and $H$ are separable, the spaces $\ucp{S}{B(H)}$ and $\ucp{A}{B(H)}$ are compact, Hausdorff, and metrizable in the \emph{bounded weak} or $BW$ topology.  In this topology, a sequence $\{\phi_n\}$ of ucp maps on (say) $S$ converges to a ucp map $\psi$ if for all $\xi,\eta\in H$ and all $s\in S$,
\begin{align*}
\langle (\phi_n(s)-\psi(s))\xi,\eta\rangle &\to 0,
\end{align*}
as $n\to \infty$. On $A$, when $\psi$ is a $\ast$-homomorphism, we can use this to get ``bounded strong-*'' (BS*) convergence.
\begin{lemma}\label{lemma1}
Let $\{\phi_n\}$ be a sequence of ucp maps from a  C*-algebra $A$ to $B(H)$ converging in the BW-topology on $\ucp{A}{B(H)}$ to a $\ast$-homomorphism $\pi$. Then for all $a\in A$ and all $\xi\in H$, 
\begin{align*}
\|(\pi(a)-\phi_n(a))\xi\| &\to 0.
\end{align*} 
\end{lemma}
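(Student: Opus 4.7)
The plan is to prove BS*-type convergence directly from BW convergence, using the Kadison--Schwarz inequality to control the ``nonlinear'' term $\|\phi_n(a)\xi\|^2$. For each fixed $a\in A$ and $\xi\in H$, I would expand
\begin{align*}
\|(\pi(a)-\phi_n(a))\xi\|^2 &= \|\pi(a)\xi\|^2 - \langle \pi(a)\xi,\phi_n(a)\xi\rangle \\
&\quad{} - \langle \phi_n(a)\xi,\pi(a)\xi\rangle + \|\phi_n(a)\xi\|^2
\end{align*}
and analyze each of the four terms separately.

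The two cross terms are the easy part: by BW convergence, for each fixed vector $\eta\in H$ we have $\langle \phi_n(a)\xi,\eta\rangle\to \langle \pi(a)\xi,\eta\rangle$, so taking $\eta=\pi(a)\xi$ shows that both inner products tend to $\|\pi(a)\xi\|^2$. Hence the sum of the first three terms tends to $-\|\pi(a)\xi\|^2$.

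The fourth term is where the $*$-homomorphism property of $\pi$ is used. Since each $\phi_n$ is ucp, the Kadison--Schwarz inequality gives $\phi_n(a)^\ast \phi_n(a)\le \phi_n(a^\ast a)$, so
\begin{align*}
\|\phi_n(a)\xi\|^2 &\le \langle \phi_n(a^\ast a)\xi,\xi\rangle.
\end{align*}
Applying BW convergence to the element $a^\ast a\in A$ and using $\pi(a^\ast a)=\pi(a)^\ast\pi(a)$, the right-hand side tends to $\|\pi(a)\xi\|^2$. Therefore $\limsup_n \|\phi_n(a)\xi\|^2 \le \|\pi(a)\xi\|^2$.

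Combining, $\limsup_n \|(\pi(a)-\phi_n(a))\xi\|^2 \le -\|\pi(a)\xi\|^2 + \|\pi(a)\xi\|^2 = 0$, which gives the conclusion. The main (and only nontrivial) step is recognizing that BW convergence alone cannot control $\|\phi_n(a)\xi\|^2$, and that Kadison--Schwarz together with the multiplicativity of the limit $\pi$ on $a^\ast a$ is exactly what closes the gap.
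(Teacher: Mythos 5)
Your proof is correct and is essentially the same as the paper's: both expand $\|(\pi(a)-\phi_n(a))\xi\|^2$, dispose of the cross terms by BW convergence, and control $\|\phi_n(a)\xi\|^2$ via the Kadison--Schwarz inequality $\phi_n(a)^{\ast}\phi_n(a)\leq\phi_n(a^{\ast}a)$ together with the multiplicativity of $\pi$ at $a^{\ast}a$. The paper merely groups the same four terms into three differences before applying Kadison--Schwarz.
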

\begin{proof}
The idea for the proof is from p. 57 of \cite{Davidson96}. 
\begin{align*}
\begin{split}
\|(\pi(a)-\phi_n(a))\xi\|^2 &= \langle(\pi(a)-\phi_n(a))\xi,\pi(a)\xi\rangle \\
&\quad + \langle (\pi(a^{\ast})-\phi_n(a^{\ast})\pi(a)\xi,\xi\rangle \\
&\quad \quad -\langle(\pi(a^{\ast}a)-\phi_n(a^{\ast})\phi_n(a))\xi,\xi\rangle.
\end{split}
\end{align*}
Note that $\phi_n(a^{\ast})\phi_n(a)\leq\phi_n(a^{\ast}a)$ for all $n$
by the Kadison-Schwarz inequality, and so 
\begin{align*}
\begin{split}
\|(\pi(a)-\phi_n(a))\xi\|^2 &\leq  \langle(\pi(a)-\phi_n(a))\xi,\pi(a)\xi\rangle \\
&\quad + \langle (\pi(a^{\ast})-\phi_n(a^{\ast}))\pi(a)\xi,\xi\rangle \\
&\quad \quad -\langle(\pi(a^{\ast}a)-\phi_n(a^{\ast}a))\xi,\xi\rangle,
\end{split}
\end{align*}
and each summand on the right goes to 0 as $n$ tends to $\infty$, for all $a\in A$ and all $\xi\in H$.
\end{proof}

Let $E\subset F$ be operator systems, and let
$\phi:E\to B(H)$ be a ucp map.  By a fundamental theorem in
\cite{Arveson69}, there exists a ucp map $\tilde{\phi}:F\to B(H)$ such
that $\tilde{\phi}|_E = \phi$; i.e., $\phi$ has an extension to a ucp
map from 
$F$ to $B(H)$. For an operator system $S\subseteq A:=C^{\ast}(S)\subseteq B(H)$,
if a representation $\pi$ of $A$ is such that $\pi|_{S}$ has a unique
extension to a ucp map from $A$ to $B(H)$, we say that $\pi$ has the \emph{unique
  extension property} (UEP) relative to $S$. (We also say ``$\pi|_{S}$
has the UEP''.)  In other words, $\pi|_{S}$ has the UEP if the only ucp
extension of $\pi|_{S}$ is $\pi$. An irreducible
representation with the UEP relative to $S$ is a \emph{boundary
  representation} for $S$.  Boundary representations are a
noncommutative analogue of Choquet points for a function space. In general, it is not so easy to determine when a representation has
the unique extension property, and few alternative characterizations
of the UEP 
are known.   In the next
proposition, we give a new characterization of the unique extension
property in terms of BW-convergence.

\begin{prop}\label{prop1}
Let $S$ be a separable concrete operator system, and let $A$ be the
C*-algebra it generates.  Let $\pi$ be a 
representation of  $A$ acting on a separable Hilbert space $H$. The following are equivalent:
\begin{enumerate}
\item $\pi|_{S}$ has the unique extension property;
\item for any sequence $\{\phi_n\}$ in $\ucp{A}{B(H)}$ such that $\{\phi_n|_{S}\}$ converges to $\pi|_{S}$ in the BW-topology on $\ucp{S}{B(H)}$, the sequence $\{\phi_n\}$ converges to $\pi$ in the BW-topology on\\ $\ucp{A}{B(H)}$;
\item for any sequence $\{\phi_n\}$ in $\ucp{A}{B(H)}$ such that $\{\phi_n|_{S}\}$ converges to $\pi|_{S}$ in the BW-topology on $\ucp{S}{B(H)}$, there exists a subsequence $\{\phi_{n_k}\}$ that converges to $\pi$ in the BW-topology on $\ucp{A}{B(H)}$.
\end{enumerate}
The same is true if we replace BW-convergence by BS-convergence.
\end{prop}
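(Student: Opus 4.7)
The plan is to show $(2)\Rightarrow(3)\Rightarrow(1)\Rightarrow(2)$ for the BW statement, and then derive the BS statement from the BW statement using Lemma \ref{lemma1}.

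The implication $(2)\Rightarrow(3)$ is immediate: if the whole sequence converges, so does every subsequence. For $(3)\Rightarrow(1)$, I would suppose $\pi|_{S}$ admits a ucp extension $\rho:A\to B(H)$ and show $\rho=\pi$. Take the constant sequence $\phi_n\assign\rho$ in $\ucp{A}{B(H)}$. Then $\phi_n|_S=\rho|_S=\pi|_S$, which trivially converges to $\pi|_S$ in the BW-topology on $\ucp{S}{B(H)}$. By (3), some subsequence $\{\phi_{n_k}\}$ converges to $\pi$ in the BW-topology on $\ucp{A}{B(H)}$; since this subsequence is identically $\rho$, its BW-limit must be $\rho$, forcing $\rho=\pi$.

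For the key implication $(1)\Rightarrow(2)$, the main point is sequential compactness. Since $S$ is separable, so is $A=C^{\ast}(S)$, and $H$ is separable by hypothesis; hence $\ucp{A}{B(H)}$ is compact, Hausdorff, and metrizable in the BW-topology. Suppose $\{\phi_n\}\subset\ucp{A}{B(H)}$ satisfies $\phi_n|_S\to\pi|_S$ in BW. To prove $\phi_n\to\pi$ in BW, it suffices by metrizability to show every subsequence of $\{\phi_n\}$ has a further subsequence BW-converging to $\pi$. Given any subsequence, compactness produces a BW-convergent further subsequence $\phi_{n_k}\to\psi\in\ucp{A}{B(H)}$. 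Restricting to $S$, BW-convergence gives $\psi|_S=\lim\phi_{n_k}|_S=\pi|_S$. Thus $\psi$ is a ucp extension of $\pi|_S$, so by the UEP we get $\psi=\pi$, as required.

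Finally, for the BS-version: since BS-convergence implies BW-convergence, the hypothesis in the BS-analogue of (2) or (3) is stronger than in the BW-version, so $(1)$ still implies the BW-hypothesis holds. Applying the already-established BW-version yields $\phi_n\to\pi$ (or a subsequence thereof) in BW. Because the limit $\pi$ is a $\ast$-homomorphism, Lemma \ref{lemma1} upgrades this BW-convergence to norm convergence $\|(\pi(a)-\phi_n(a))\xi\|\to 0$ for all $a\in A$, $\xi\in H$, which is precisely BS-convergence. The reverse implication $(3)\Rightarrow(1)$ in the BS-setting uses the same constant-sequence trick, noting that a constant sequence trivially converges in BS as well. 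I expect the only subtle point to verify carefully is that the metrizability and compactness of $\ucp{A}{B(H)}$ in the BW-topology hold under the separability hypotheses, which justifies the ``every subsequence has a convergent subsequence'' argument that is the engine of $(1)\Rightarrow(2)$.
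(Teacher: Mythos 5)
Your proposal is correct and follows essentially the same route as the paper: the constant-sequence trick for $(3)\Rightarrow(1)$ and the compactness/metrizability argument (every subsequence has a further subsequence converging to a ucp limit that restricts to $\pi|_S$, hence equals $\pi$ by the UEP) for $(1)\Rightarrow(2)$. Your explicit treatment of the BS case via Lemma~\ref{lemma1} is a detail the paper leaves implicit, and it is handled correctly.
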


\begin{proof}
Note that (2) $\Rightarrow$ (3) is trivial. 

(1) $\Rightarrow$ (2): As noted above, the BW-topology on $\ucp{A}{B(H)}$ is Hausdorff and metrizable, and $\ucp{A}{B(H)}$ is also compact in this topology. Because of this, we can take advantage of elementary results about convergence of sequences in compact Hausdorff metrizable spaces.  The sequence $\{\phi_n\}$ must have a BW-convergent subsequence
$\{\phi_{n_k}\}$; call its limit $\phi$.  Now by assumption, $\{\phi_{n_k}|_{S}\}$ BW-converges to
$\pi|_{S}$. We see that $\phi|_{S}=\pi|_{S}$. Because $\pi$ is assumed
to have the UEP, we have $\phi=\pi$. Now every BW-convergent
subsequence of $\{\phi_n\}$ converges to $\pi$, which implies that $\{\phi_n\}$ itself 
BW-converges to $\pi$.

(3)$\Rightarrow$(1): Obvious. Let $\phi$ be an extension of $\pi|_{S}$; take $\phi_n=\phi$ for all $n$. Then (3) implies $\{\phi_n\}$ BW-converges to $\pi$ and so $\phi=\pi$.
\end{proof}

\section{Rigidity of ucp maps on nuclear C*-algebras}\label{sect3}

Let $\pi$ be a representation of $A$, the unital C*-algebra
generated by $S$ in $B(H)$.  The question we seek to answer in this section is, roughly, the extent
to which the ucp extensions of $\pi|_{S}$ to $A$ are determined by the
irreducible representations of $A$. More specifically, if every
irreducible representation of $A$ is a boundary representation for
$S$, must $\pi|_{S}$ extend uniquely to $A$?  We might start by
considering the identity representation of a Type I C*-algebra. So suppose $\psi$ is a ucp
map on $A$ (with deliberately ambiguous codomain) that satisfies
$\psi(s)=s$ for all $s\in S$, and assume every irreducible
representation of $A$ is a boundary representation for $S$. We know from Arveson's extension
theorem that there exists a ucp map from $A$ to $B(H)$ extending
$\psi$, but when $A$ is Type I, there is also a ucp map to $A''$
extending $\psi$ because $A''$ is injective.  If one could show that the ucp map with codomain
$B(H)$ is a representation (for every faithful representation)  it would effectively solve Arveson's
hyperrigidity conjecture (see \cite{Arveson08a} and 
Section~\ref{sect4}).  In this section, we show that the ucp
map with codomain $A''$ must be a representation (Corollary~\ref{maincor}). We will also give
some information in the more general case in Section~\ref{sect4}.

We recall some basic facts about direct integrals of Hilbert spaces
and decomposable operators.  Let $(X,\mu)$ be a standard Borel measure
space and let $H_x$ be a separable Hilbert space for each $x\in X$. A
\emph{measurable field of Hilbert spaces} is a vector subspace
$\mc{V}$ of $\Pi_{x\in X}H_x$ satisfying some natural measurability
conditions (see \cite{Blackadar06}), and whose elements we refer to as
\emph{measurable vector fields}.  One can obtain a Hilbert space $H$
from $\mc{V}$ by considering the set of measurable vector fields $\xi$ such that 
\begin{align*}  
\|\xi\| := \left ( \int_X \|\xi(x)\|^2\,d\mu(x)\right)^{1/2} < \infty.
\end{align*}
Identifying vector fields which agree almost everywhere, and defining
\begin{align*}
\langle \xi,\eta\rangle &:= \int_X\langle \xi(x),\eta(x)\rangle_{x}\,d\mu(x),
\end{align*}
one can show that $H$ is a Hilbert space.  We write
$H=\int_{X}^{\oplus} H_x\,d\mu(x)$. For $a_x\in B(H_x)$, $(a_x)$ is a
\emph{measurable field of bounded operators} if $(a_x\xi(x))$ is a
measurable vector field for each measurable vector field $\xi$.  When the family $(a_x)$ is uniformly
bounded in norm, it defines an operator $a$ on $B(H)$. This operator
is said to be \emph{decomposable} and  is written as
$a=\int_{X}^{\oplus}a_x\,d\mu(x)$. Its norm is
$\tmop{ess-sup}\,\|a_x\|$.

The main technical result is the following.

\begin{theorem}\label{thm4}
Let $S$ be a separable operator system in $B(H)$ generating a nuclear 
C*-algebra $A$. Suppose every
factor representation of $A$ has the UEP relative to $S$. Let $\rho$
be a faithful representation of $A$ on $B(K)$ and let $\gamma:\rho(A)\to
B(K)$ be a ucp map extending $\tmop{id}_{\rho(S)}$. Then for every conditional
expectation $E:B(K)\to \rho(A)''$, we have $E\gamma\rho(a)=\rho(a)$
for all $a\in A$.

\end{theorem}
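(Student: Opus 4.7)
The plan is to verify directly that the ucp map $\psi := E \circ \gamma \circ \rho : A \to \rho(A)''$ satisfies $\psi = \rho$. First observe that $\psi$ extends $\rho|_S$: since $\gamma$ fixes $\rho(S)$ pointwise by hypothesis, and $\rho(S) \subseteq \rho(A)''$, the conditional expectation $E$ also fixes $\rho(S)$. The strategy is then to reduce the equality $\psi = \rho$ to a fibrewise statement by disintegrating both $\rho$ and $\psi$ along the center of $\rho(A)''$, and to invoke the UEP on each fibre.

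Because $K$ is separable, $\rho(A)''$ admits a central disintegration $\rho(A)'' = \int_X^{\oplus} M_x \, d\mu(x)$ over a standard Borel space $(X,\mu)$, with each $M_x$ a factor; correspondingly $\rho$ decomposes as $\rho = \int_X^{\oplus} \rho_x \, d\mu(x)$, where each $\rho_x : A \to B(K_x)$ is a factor representation satisfying $\rho_x(A)'' = M_x$. The next step is to disintegrate $\psi$: since $\psi(a) \in \rho(A)''$ for every $a\in A$, each $\psi(a)$ is decomposable as $\psi(a) = \int_X^{\oplus}\psi(a)_x\, d\mu(x)$ with $\psi(a)_x \in M_x$. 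Working on a countable $*$-dense subset of $A$ and discarding a single null set, one obtains for $\mu$-almost every $x$ a well-defined ucp map $\psi_x : A \to M_x$ that restricts to $\rho_x$ on $S$.

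With this fibrewise picture, the hypothesis applies directly. For almost every $x$, $\rho_x$ is a factor representation of $A$ and therefore has the UEP relative to $S$ by assumption. Since $\psi_x : A \to M_x \subseteq B(K_x)$ is a ucp extension of $\rho_x|_S$, the UEP forces $\psi_x = \rho_x$ almost everywhere. Re-integrating yields
\begin{align*}
\psi(a) \;=\; \int_X^{\oplus} \psi_x(a)\, d\mu(x) \;=\; \int_X^{\oplus} \rho_x(a)\, d\mu(x) \;=\; \rho(a)
\end{align*}
for every $a \in A$, which is the desired conclusion.

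The main obstacle will be the disintegration step for $\psi$. For any fixed $a$, the fibrewise value $\psi(a)_x$ is defined only modulo a null set, so one must choose these selections uniformly in $a$, verify that the resulting assignment $a \mapsto \psi_x(a)$ is linear, unital, and completely positive for $\mu$-a.e.\ $x$, and confirm that its range really lies inside the factor $M_x$ rather than merely in $B(K_x)$. This is precisely the point where nuclearity of $A$ is needed: it guarantees that $\rho(A)''$ is injective (so that the hypothesis that some $E$ exists is a live one) and, more importantly, that ucp maps from $A$ into $\rho(A)''$ are compatible with the direct-integral structure, so that the measurable-selection machinery for decomposable operators produces honest fibrewise ucp maps into the central fibres $M_x$.
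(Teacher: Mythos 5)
Your strategy for separable $K$ is essentially the paper's: centrally disintegrate $K=\int_X^{\oplus}K_x\,d\mu$ over $\mathcal{Z}(\rho(A)'')$, note that the fibre representations of $A$ are factor representations, apply the UEP hypothesis fibrewise to the ucp map $E\gamma\rho$, and re-integrate. Two points need attention.

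First, the step you flag as ``the main obstacle'' --- producing honest fibrewise ucp maps $\psi_x$ --- has a cleaner resolution than the measurable-selection argument you sketch, and it does not come from nuclearity. Reduction theory (\cite[4.12]{Pedersen79}) gives, after deleting a single null set, genuine representations $\pi_x$ of $\rho(A)''$ onto the fibres with $b=\int_X^{\oplus}\pi_x(b)\,d\mu(x)$ for every $b\in\rho(A)''$. Setting $\psi_x:=\pi_x\circ E\gamma\rho$ then yields a bona fide ucp map $A\to B(K_x)$ for every such $x$: linearity, unitality, and complete positivity are automatic because you are composing a ucp map with a $\ast$-homomorphism, and no selection uniform in $a$ is needed. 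Moreover $\psi_x$ agrees with $\pi_x\rho$ on $S$ because $E\gamma\rho(s)=\rho(s)$ holds as an identity of operators (here you correctly note that $E$ fixes $\rho(S)\subseteq\rho(A)''$). Your worry about the range landing in $M_x$ rather than $B(K_x)$ is also moot, since the UEP concerns extensions valued in $B(K_x)$. Nuclearity plays no role in any of this; in the paper it serves only to guarantee that conditional expectations onto commutants of the form $\sigma(A)''$ exist, so that the hypotheses are nonvacuous and the auxiliary expectation in the next paragraph is available.

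Second, the theorem does not assume $K$ is separable, and your proof begins ``Because $K$ is separable\dots''. This case genuinely requires an extra argument: the paper writes $\rho\cong\oplus_i\rho_i$ with each $K_i$ separable, fixes an auxiliary faithful representation $\sigma$ of $A$ on a separable space $L$ together with a conditional expectation $F:B(L)\to\sigma(A)''$ (this is where nuclearity is used), applies the separable case to each faithful separable representation $\rho_i\oplus\sigma$ with the conditional expectation $\mathrm{Ad}\,P_i\circ E\oplus F$, and assembles the conclusion $E\gamma\rho=\rho$ from the pieces. Without some such reduction your argument only establishes the theorem for separable $K$.
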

\begin{proof}
We first prove the result when the Hilbert space $K$ is separable. Let
$E:B(K)\to \rho(A)''$ be a conditional expectation. Let $\gamma:B(K)\to
B(K)$ be a ucp map such that $\gamma\rho(s)=\rho(s)$ for all $s\in
S$. We will show that $E\gamma\rho=\rho$ for any conditional
expectation $E$, under
the assumption that every factor representation of $\rho(A)$ has the
UEP relative to $\rho(S)$.

Consider the commutative von Neumann algebra
$M:=\mc{Z}(\rho(A)'')$. Since it acts on a separable Hilbert space,
there is a weak* dense unital commutative separable C*-subalgebra $M_0$ of $M$. Let $X$ be the spectrum of $M_0$, so that $M_0\cong C(X)$.  There is a probability measure $\mu$ on $X$ such that $M\cong L^{\infty}(X,\mu)$. This gives us a disintegration $K=\int_X^{\oplus}K_x\,d\mu$, and the identity representation of $\rho(A)''$ may be decomposed as
\begin{align*}
b &= \int_{X}^{\oplus}\pi_x(b)\,d\mu(x),
\end{align*}
for all $b\in \rho(A)''$ (\cite[4.12]{Pedersen79}). After discarding a set of measure zero from $X$, the resulting set (which we still call $X$) has the property that each $\pi_x|_{\rho(A)}$ is a factor representation of $\rho(A)$. Since $E\gamma\rho(A)$ is contained in $\rho(A)''$, we may write
\begin{align*}
E\gamma\rho(a) &= \int_{X}^{\oplus}\pi_x(E\gamma\rho(a))\,d\mu(x),
\end{align*}
for all $a\in A$. Note that $\pi_x\rho$
is a factor representation of $A$ for all $x\in X$. Now
$\gamma\rho|_{S}=\rho|_{S}$ means that $\pi_x
E\gamma\rho|_{S}=\pi_x\rho|_{S}$ for a.e. $x\in X$. Because we
assumed that every factor representation of $\rho(A)$ has the UEP, we
conclude that $\pi_x
E\gamma\rho=\pi_x\rho$ for a.e. $x\in X$; from this it follows
that $E\gamma\rho=\rho$. 

Now assume that $K$ is not necessarily separable. Because $A$ is
separable, the representation
$\rho$ is unitarily equivalent to $\oplus \rho_i$, where each $\rho_i$
is a representation acting on a separable Hibert space $K_i$.  So it
suffices to show the claim for $\rho:=\oplus \rho_i$. Fix a
faithful separable representation $\sigma$ of $A$ on $B(L)$ with
conditional expectation $F:B(L)\to \sigma(A)''$.  Then $\rho_i\oplus
\sigma$ is a faithful separable representation of $A$ for each $i$.  Let $P_i$ be the projection of
$K$ onto $K_i$; note that $P_i\in \rho(A)'$ for each $i$. Consider the conditional
expectation $\tmop{Ad}\,P_i\circ E\oplus
 F:B(K_i\oplus L)\to (\rho_i\oplus \sigma)(A)''$. Using the result for
 separable representations above, we have $(\tmop{Ad}\,P_i\circ E\oplus
 F)(\gamma\rho\oplus \sigma)(a)=(\rho_i\oplus \sigma)(a)$ for all
 $a\in A$. Thus $(E\oplus F)(\gamma\rho\oplus \sigma)=\rho\oplus
 \sigma$, and we conclude $E\gamma\rho=\rho$.
\end{proof}

\begin{cor}\label{morecor}
Let $S$ be a separable operator system generating a Type I
C*-algebra $A$. If every irreducible representation of $A$  is a
boundary representation for $S$, then for any representation $\pi$ of
$A$ on $B(K)$ and any ucp map $\psi:\pi(A)\to B(K)$ extending
$\tmop{id}_{\pi(S)}$ and any conditional expectation $E:B(K)\to
\pi(A)''$, $E\psi\pi=\pi$. 
\end{cor}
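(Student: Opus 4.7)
The plan is to deduce the corollary from Theorem~\ref{thm4}. Since every Type~I C*-algebra is nuclear, the nuclearity hypothesis of that theorem is automatic; what needs checking is (a)~that every factor representation of $A$ has the UEP relative to $S$, so that the theorem can actually be invoked, and (b)~that the conclusion can be transferred from faithful representations (as required by Theorem~\ref{thm4}) to the possibly non-faithful $\pi$ appearing in the corollary. I expect (a) to be the main obstacle.

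For (a), I will establish an amplification lemma: if $\sigma:A\to B(H)$ has the UEP relative to $S$, then so does $\sigma\otimes 1_{\mathcal{K}}:A\to B(H\otimes\mathcal{K})$ for any Hilbert space $\mathcal{K}$. Given a ucp extension $\phi:A\to B(H\otimes\mathcal{K})$ of $(\sigma\otimes 1_{\mathcal{K}})|_{S}$, compression by the isometry $V_v\xi:=\xi\otimes v$, for each unit vector $v\in\mathcal{K}$, produces a ucp extension $V_v^{*}\phi(\cdot)V_v$ of $\sigma|_{S}$ to $A$, which by the UEP must equal $\sigma$. This yields $\langle\phi(a)(\xi\otimes v),\eta\otimes v\rangle=\langle\sigma(a)\xi,\eta\rangle$ for every unit $v\in\mathcal{K}$, and by homogeneity for every $v\in\mathcal{K}$. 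Applying the polarization identity to the diagonal sesquilinear form $u\mapsto\langle(\phi(a)-\sigma(a)\otimes 1_{\mathcal{K}})(\xi\otimes u),\eta\otimes u\rangle$, which vanishes identically, forces $\phi(a)=\sigma(a)\otimes 1_{\mathcal{K}}$ on all of $H\otimes\mathcal{K}$. Since every factor representation of a Type~I C*-algebra is unitarily equivalent to such an amplification of some irreducible representation, and irreducible representations of $A$ are boundary representations for $S$ by hypothesis, it follows that every factor representation of $A$ has the UEP relative to $S$.

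For (b), I pass to the quotient $A':=A/\ker\pi$, which is again Type~I (hence nuclear) and is generated by $S':=q(S)$, where $q:A\to A'$ is the quotient map. If $\rho$ is any irreducible representation of $A'$, then $\rho\circ q$ is irreducible on $A$ and therefore a boundary representation for $S$; any ucp extension of $\rho|_{S'}$ to $A'$ pulls back through $q$ to a ucp extension of $(\rho\circ q)|_{S}$ to $A$, so uniqueness on $A$ forces uniqueness on $A'$, making $\rho$ a boundary representation for $S'$. Thus the induced faithful representation $\bar\pi:A'\to B(K)$, together with $\psi$ (now regarded as a ucp map on $\bar\pi(A')=\pi(A)$) and the conditional expectation $E$, satisfies the hypotheses of Theorem~\ref{thm4}, which gives $E\psi\bar\pi=\bar\pi$. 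Precomposing with $q$ yields $E\psi\pi=\pi$ on $A$, completing the proof.
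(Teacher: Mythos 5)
Your proof is correct, but it reaches the conclusion by a genuinely different route than the paper. For the passage from faithful to arbitrary representations, the paper does not quotient by $\ker\pi$: it fixes an auxiliary faithful representation $\rho$ together with a conditional expectation $F$ onto $\rho(A)''$, applies Theorem~\ref{thm4} to the faithful representation $\rho\oplus\pi$ with the ucp map $\rho\oplus\psi\pi$ and the expectation $F\oplus E$, and reads off $E\psi\pi=\pi$ from the second summand. Your quotient argument replaces this with the (correctly verified) observations that $A/\ker\pi$ is again Type I and generated by the separable operator system $q(S)$, and that boundary representations pull back along the surjection $q$; this avoids the auxiliary choices of $\rho$ and $F$ (and, incidentally, the question of whether $F\oplus E$ is a conditional expectation onto $(\rho\oplus\pi)(A)''$ rather than onto the possibly larger algebra $\rho(A)''\oplus\pi(A)''$), at the cost of having to re-check the hypotheses for the quotient algebra. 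For the factor-representation hypothesis of Theorem~\ref{thm4}, the paper simply cites \cite{DM05} --- and only does so in the proof of the subsequent Corollary~\ref{maincor} --- for the fact that multiples and direct sums of representations with the UEP again have the UEP; your amplification lemma via the isometries $V_v$ and polarization is a correct, self-contained proof of exactly the special case needed, given that every factor representation of a Type I C*-algebra is a multiple of an irreducible one. The only cosmetic point is that you state the amplification lemma for $A$ but actually need it for $A'=A/\ker\pi$; since $A'$ is Type I and you have shown its irreducible representations are boundary representations for $q(S)$, the same argument applies verbatim, so this is easily repaired by a sentence.
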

\begin{proof}
Fix a faithful representation $\rho$ of $A$ and a conditional
expectation $F:B(K)\to \rho(A)''$.  We can apply
Theorem~\ref{thm4} to the faithful representation $\rho\oplus \pi$
using the conditional expectation $F\oplus E$;  we
conclude that $(F\oplus E)(\rho\oplus \psi\pi)(a)=(\rho\oplus
\pi)(a)$ for all $a\in A$, and so $E\psi\pi=\pi$. 
\end{proof}

\begin{cor}\label{maincor}
Let $S$ be a separable operator system generating a Type I
C*-algebra $A$.  If every irreducible representation of $A$  is a
boundary representation for $S$, then for any ucp map $\psi:A\to A''$ such that
$\psi(s)=s$, we have $\psi(a)=a$.
\end{cor}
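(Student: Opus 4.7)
The plan is to deduce this corollary directly from Corollary~\ref{morecor} by taking $\pi$ to be the identity representation $\id_A : A \hookrightarrow B(H)$ and exhibiting a conditional expectation $E : B(H) \to A''$ onto which $\psi$ naturally lands. The identity representation makes $\pi(A) = A$ and $\pi(A)'' = A''$, so the hypothesis $\psi(s) = s$ for $s \in S$ is exactly saying that $\psi$ extends $\id_{\pi(S)}$; and since $A'' \subseteq B(H)$, we may view $\psi : A \to A''$ as a ucp map $\psi : \pi(A) \to B(H)$, so all hypotheses of Corollary~\ref{morecor} are in place except the conditional expectation.

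To supply the conditional expectation, I would invoke the standing fact recorded in the introduction that Type~I C*-algebras are nuclear, and that whenever $A \subseteq B(H)$ is nuclear the bidual $A''$ is injective. Applying the definition of injectivity to the identity (faithful) representation of $A''$ on $H$ then produces a conditional expectation $E : B(H) \to A''$. This is the only nontrivial input beyond the previous results; I expect it to be the main (and essentially only) obstacle, because everything else is formal.

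With $E$ in hand, Corollary~\ref{morecor} gives $E\psi\id_A(a) = \id_A(a)$ for every $a \in A$, that is,
\begin{equation*}
  E(\psi(a)) = a.
\end{equation*}
On the other hand $\psi(a) \in A''$ by hypothesis, and a conditional expectation onto $A''$ restricts to the identity map on $A''$. Hence $E(\psi(a)) = \psi(a)$. Combining the two identities yields $\psi(a) = a$ for all $a \in A$, which is the conclusion. The whole argument is essentially a one-line deduction once one recognizes that the codomain restriction $\psi(A) \subseteq A''$ is precisely what makes $E$ invisible in the final identity.
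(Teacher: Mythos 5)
Your proof is correct and takes essentially the same route as the paper: the paper applies Theorem~\ref{thm4} directly with $\rho$ the identity representation (pausing to check that Type~I plus all irreducibles being boundary representations gives every factor representation the UEP), while you invoke Corollary~\ref{morecor} with $\pi=\id$, which comes to the same thing since that corollary is itself an application of Theorem~\ref{thm4}, and your sourcing of the conditional expectation from nuclearity/injectivity of $A''$ matches the paper's standing remarks. The decisive final step --- that $\psi(A)\subseteq A''$ forces $E\psi=\psi$, turning $E\psi(a)=a$ into $\psi(a)=a$ --- is identical in both arguments.
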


\begin{proof}
We apply Theorem~\ref{thm4}, taking $\rho$ to be the identity representation.  When $A$ is Type I,
every factor representation is a multiple of an
irreducible representation. If every irreducible representation is a
boundary representation, direct sums of irreducibles will have the
UEP (\cite{DM05}).  So the hypotheses of the previous theorem are satisfied.  Because
$\psi(A)\subseteq A''$, we have $E\psi=\psi$, and so $\psi(a)=a$ for
all $a\in A$ follows immediately.
\end{proof}

\section{Hyperrigidity revisited}\label{sect4}

Arveson formulated a noncommutative version of a Korovkin set as follows.  Let $S$
be a concrete operator system and let $A$ be its generated
C*-algebra.  We call a representation $\pi:A\to B(K)$
\emph{hyperrigid} if whenever $\{\phi_n\}\subset \ucp{\pi(A)}{B(K)}$
satisfies $\|\phi_n\pi(s)-\pi(s)\|\to 0$ for all $s\in S$, then
$\|\phi_n\pi(a)-\pi(a)\|\to 0$ for all $a\in A$. A hyperrigid
representation must have the UEP; the converse of this is probably
false. We say that an operator system $S$ is
hyperrigid if every 
faithful representation of $A$ is hyperrigid. Arveson introduced this
notion in \cite{Arveson08a} while exploring Korovkin-type theorems for
C*-algebras.  We argued in Section~\ref{sect2} that it is more natural to require that
$\{\phi_n\pi\}$ (or a subsequence) converges to $\pi$ in the bounded-weak
or bounded-strong(*) topology on $\ucp{A}{B(K)}$, as this leads to a
characterization of representations having the UEP.

In \cite{Arveson08a}, Arveson proves that when $S$ is separable,  $S$ is hyperrigid if and only if every representation has the UEP relative to $S$. It follows that when $S$ is hyperrigid, every irreducible representation is a boundary representation.  The converse of this statement is the ``hyperrigidity conjecture''.  

\begin{conj}\cite[4.3]{Arveson08a}\label{conj7}
Let $S$ be a separable operator system in $B(H)$. If every irreducible representation of $A:=C^{\ast}(S)$ is a boundary representation for $S$, then $S$ is hyperrigid.
\end{conj}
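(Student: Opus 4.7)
The plan is to invoke Arveson's characterization of hyperrigidity (cited just above) to reduce the conjecture to showing that \emph{every} representation of $A$ has the UEP relative to $S$, and then to leverage the main result of Section~\ref{sect3} together with a multiplicative-domain argument to bridge the gap in codomains from $\pi(A)''$ to $B(K)$.

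So fix a representation $\pi:A\to B(K)$ and a ucp extension $\psi:A\to B(K)$ of $\pi|_{S}$; the goal is to show $\psi=\pi$. As a preliminary step, beyond the Type~I case, one would need every factor representation of $A$ to have the UEP relative to $S$; for Type~I this is supplied by \cite{DM05}, but in the general nuclear case it would require extending the disintegration in Theorem~\ref{thm4} to handle arbitrary factor types. Next, choose any conditional expectation $E:B(K)\to\pi(A)''$, which exists when $A$ is nuclear. By Corollary~\ref{morecor}, $E\psi=\pi$. The problem thus reduces to upgrading $E\psi=\pi$ to $\psi=\pi$.

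The natural attack is through the multiplicative domain of $E$. Using $E\psi=\pi$ together with the Kadison--Schwarz inequality applied to both $\psi$ and $E$, one checks that
\begin{align*}
E(\psi(a)^{*}\psi(a))=\pi(a^{*}a)=E(\psi(a))^{*}E(\psi(a)),
\end{align*}
and the analogous identity for $aa^{*}$, so $\psi(a)$ lies in the multiplicative domain of $E$. If one could further show that $\psi(a)\in\pi(A)''$, then $\psi(a)=E\psi(a)=\pi(a)$ would close the argument.

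The chief obstacle, and presumably the reason the conjecture is still open, is exactly this last step: the multiplicative domain of a conditional expectation onto $\pi(A)''$ need not coincide with $\pi(A)''$. One might try varying $E$ over all conditional expectations onto $\pi(A)''$ and intersecting their multiplicative domains, or exploit the fine structure of Type~I representations via the decomposition used in Theorem~\ref{thm4}, or seek a perturbation argument along a path of ucp maps to rule out nontrivial multiplicative-domain elements. Outside the nuclear setting, even the existence of a suitable conditional expectation is problematic, so genuinely new techniques appear to be needed.
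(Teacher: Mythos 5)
The statement you are addressing is Arveson's hyperrigidity conjecture, and the paper does not prove it: it explicitly records that the truth of the conjecture ``remains unknown even for commutative C*-algebras.'' So there is no proof in the paper to compare against, and your proposal --- as you yourself concede --- does not close the argument. Your reduction, via Arveson's theorem that for separable $S$ hyperrigidity is equivalent to every representation having the UEP, is correct, and your intermediate steps reproduce the paper's partial results: the identity $E\psi=\pi$ is essentially Theorem~\ref{thm4}/Corollary~\ref{morecor}, and the Kadison--Schwarz computation showing $\psi(a)\in\tmop{Mult}(E)$ is exactly part (1) of Theorem~\ref{othercor}. (One point of care: Corollary~\ref{morecor} is stated for ucp maps defined on $\pi(A)$ and composed with $\pi$, whereas a UEP competitor is a ucp map on $A$ agreeing with $\pi$ on $S$ that need not a priori factor through $\pi$; to get $E\psi=\pi$ you should apply Theorem~\ref{thm4} to the faithful representation $\rho\oplus\pi$ with $\gamma\bigl((\rho\oplus\pi)(a)\bigr)=\rho(a)\oplus\psi(a)$.)

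The genuine gap is the one you name: from $E\psi=\pi$, equivalently from $\psi(A)\subseteq\tmop{Mult}(E)$, one cannot conclude $\psi=\pi$ without knowing $\psi(A)\subseteq\pi(A)''$, and nothing in the hypotheses forces this --- the multiplicative domain of a conditional expectation onto $\pi(A)''$ can be much larger than $\pi(A)''$. This is precisely why Corollary~\ref{maincor} is stated with codomain $A''$ rather than $B(H)$, and why Section~\ref{sect4} derives only structural consequences (that $E|_{B}$ exhibits $\rho(A)$ as the C*-envelope of $\gamma\rho(A)$, with $\ker E|_{B}$ the Shilov ideal) rather than the conjecture itself. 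There is a second gap, which you also flag: outside the Type I case the hypothesis that every irreducible representation is a boundary representation is not known to imply that every factor representation has the UEP, which is what Theorem~\ref{thm4} actually requires; and outside the nuclear case the conditional expectation $E$ need not even exist. Your write-up is therefore an accurate summary of the state of the art and of the obstruction, but it is not a proof, and the statement remains open.
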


The truth of the conjecture remains unknown even for commutative
C*-algebras.  Dritschel and McCullough \cite{DM05} showed it is true when the
C*-algebra generated by $S$ has a countable spectrum.  Put in
another way, the conjecture is true when $A''$ is purely atomic.  Arveson
later obtained a partial result for commutative C*-algebras:
he showed that if every irreducible representation of $C(X)$ has the
UEP relative to a function space generating $C(X)$, then there is a
``local'' unique extension property for arbitrary representations.
To prove the conjecture, it suffices to show the following (using the
notation from Theorem~\ref{thm4}): if every irreducible representation
of $A$ is a boundary representation for $S$, then for every
faithful representation $\rho:A\to B(K)$ and every ucp map
$\gamma:A\to B(K)$ extending $\tmop{id}_{\rho(S)}$,
$C^{\ast}(\gamma\rho(A))=\rho(A)$.  From now on, we will write $B$ for
$C^{\ast}(\gamma\rho(A))$. 

There are more consequences one may derive from Theorem~\ref{thm4}
that reveal how the C*-algebras $A$ and $B$ are related.
Recall that a \emph{boundary ideal} is an ideal $J$ of $A$ such that the
quotient map $q_J$ is completely isometric on $S$. A boundary ideal
that contains all other boundary ideals is the \emph{Shilov ideal} $\mathfrak{S}$;
such an ideal exists by \cite{Hamana79}. There is a unique
``smallest'' C*-algebra generated by $S$, called the
\emph{C*-envelope} of $S$, denoted $C_e^{\ast}(S)$,
and it is $\ast$-isomorphic to $A/\mathfrak{S}$. Alternatively, the
C*-envelope of $S$ is the C*-algebra generated by $S$
in its injective envelope $I(S)$ with the Choi-Effros multiplication
(see below).  

For a ucp map $\psi:A_1\to A_2$ where $A_1,A_2$ are unital
C*-algebras, denote by $\tmop{Mult}(\psi)$ the
\emph{multiplicative domain} of $\psi$; that is, the set 
\begin{align*}
\{x \in A_1: \psi(x^{\ast}x)=\psi(x)^{\ast}\psi(x) \text{ and }\psi(xx^{\ast})=\psi(x)\psi(x)^{\ast}\}.
\end{align*}
$\tmop{Mult}(\psi)$ is a unital C*-subalgebra of $A_1$ and
$\psi$ restricted to this set is a $\ast$-homomorphism. 

\begin{theorem}\label{othercor}
Let $S$, $A$, $\rho$, $E$, and $\gamma$ be as in Theorem~\ref{thm4}. Suppose every factor
representation of $A$ has the UEP relative to $S$. Let $B$ be the unital 
C*-algebra generated by the operator system $\gamma\rho(A)$ in $B(K)$.
\begin{enumerate}
\item $A\subseteq B\subseteq \tmop{Mult}(E)$ and $E|_{B}$ is a
  surjective idempotent $\ast$-homomorphism of $B$ onto $\rho(A)$;
\item $\rho(A)$ is $\ast$-isomorphic via $E|_{B}$ to the C*-envelope of the
  operator system $\gamma\rho(A)$, and so $\ker E|_{B}$ is the Shilov
  ideal of $\gamma\rho(A)$ in $B$;
\item If $\sigma$ is a representation of $B$ which factors through
  $\ker E|_{B}$, then $\sigma|_{\gamma\rho(A)}$ has the UEP.
\end{enumerate}
\end{theorem}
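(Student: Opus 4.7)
The plan is to establish the three parts in order, exploiting Theorem~\ref{thm4} and Kadison--Schwarz inequalities throughout. For (1), the inclusion $\rho(A)\subseteq B$ is immediate: because $\gamma\rho(s)=\rho(s)$ for $s\in S$, the operator system $\gamma\rho(A)$ contains $\rho(S)$, so $\rho(A)=C^{\ast}(\rho(S))\subseteq B$. To obtain $B\subseteq\tmop{Mult}(E)$, I would combine Kadison--Schwarz for $\gamma\rho$ and for $E$ with Theorem~\ref{thm4}: for $a\in A$,
\[
\rho(a^{\ast}a) = E(\gamma\rho(a))^{\ast}E(\gamma\rho(a)) \leq E(\gamma\rho(a)^{\ast}\gamma\rho(a)) \leq E(\gamma\rho(a^{\ast}a)) = \rho(a^{\ast}a),
\]
so equality holds throughout and $\gamma\rho(a)\in\tmop{Mult}(E)$; the $aa^{\ast}$ calculation is identical. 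Since $\tmop{Mult}(E)$ is a C*-subalgebra of $B(K)$ containing the generators of $B$, $B\subseteq\tmop{Mult}(E)$, and $E|_{B}$ is a $\ast$-homomorphism with image $C^{\ast}(E(\gamma\rho(A)))=\rho(A)$. Idempotency is immediate because $E$ fixes $\rho(A)''\supseteq\rho(A)$.

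For (2), the chain $\|\rho(a)\|=\|E\gamma\rho(a)\|\leq\|\gamma\rho(a)\|\leq\|\rho(a)\|$, together with its matrix amplifications, shows $E|_{\gamma\rho(A)}$ is a complete order isomorphism from $\gamma\rho(A)$ onto the C*-algebra $\rho(A)$. Because any unital C*-algebra is its own C*-envelope, this forces $C^{\ast}_{e}(\gamma\rho(A))\cong\rho(A)$. I would then invoke the universal property of the C*-envelope: the inclusion $\gamma\rho(A)\hookrightarrow B$ yields the canonical quotient $\pi_{B}:B\to C^{\ast}_{e}(\gamma\rho(A))$ with kernel $\mathfrak{S}$, while the complete isometry $E|_{\gamma\rho(A)}$ yields $\pi_{\rho}:\rho(A)\to C^{\ast}_{e}(\gamma\rho(A))$ with $\pi_{\rho}\circ E|_{B}=\pi_{B}$ by uniqueness on the common generating operator system. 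Because $E|_{\gamma\rho(A)}$ has image equal to all of $\rho(A)$, $\pi_{\rho}$ is a $\ast$-homomorphism completely isometric on an operator system equal to its entire domain, hence injective, giving $\ker E|_{B}=\ker\pi_{B}=\mathfrak{S}$.

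For (3), write $\sigma=\tau\circ E|_{B}$ for some representation $\tau:\rho(A)\to B(L)$, and let $\psi:B\to B(L)$ be a ucp extension of $\sigma|_{\gamma\rho(A)}$. Since $\sigma$ is a $\ast$-homomorphism annihilating $\ker E|_{B}$ and the element $\gamma\rho(a^{\ast}a)-\gamma\rho(a)^{\ast}\gamma\rho(a)$ lies in $\ker E|_{B}$ by Theorem~\ref{thm4}, we obtain $\sigma(\gamma\rho(a^{\ast}a))=\sigma(\gamma\rho(a))^{\ast}\sigma(\gamma\rho(a))=\tau\rho(a^{\ast}a)$. Then the sandwich
\[
\psi(\gamma\rho(a))^{\ast}\psi(\gamma\rho(a)) \leq \psi(\gamma\rho(a)^{\ast}\gamma\rho(a)) \leq \psi(\gamma\rho(a^{\ast}a))
\]
has both outer terms equal to $\tau\rho(a^{\ast}a)$ (using $\psi=\sigma$ on $\gamma\rho(A)$ for the right bound and the previous computation for the left), so equality holds and $\gamma\rho(a)\in\tmop{Mult}(\psi)$. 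Hence $B=C^{\ast}(\gamma\rho(A))\subseteq\tmop{Mult}(\psi)$, making $\psi$ a $\ast$-homomorphism that agrees with $\sigma$ on the generating set, and therefore $\psi=\sigma$.

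The main obstacle I anticipate is the tight identification in (2): although it is routine that $\ker E|_{B}$ is a boundary ideal, proving the reverse inclusion $\mathfrak{S}\subseteq\ker E|_{B}$ relies essentially on the special feature that the complete order image $E|_{\gamma\rho(A)}(\gamma\rho(A))$ fills out the full C*-algebra $\rho(A)$, leaving no room for $\pi_{\rho}$ to have a kernel.
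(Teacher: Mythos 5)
Your proposal is correct, and parts (1) and (3) follow essentially the same path as the paper: the same double Kadison--Schwarz sandwich $\rho(a^{\ast}a)=E\gamma\rho(a)^{\ast}E\gamma\rho(a)\leq E(\gamma\rho(a)^{\ast}\gamma\rho(a))\leq E\gamma\rho(a^{\ast}a)=\rho(a^{\ast}a)$ for the multiplicative domain of $E$, and the same factorization $\sigma=\tau\circ E|_{B}$ followed by a multiplicative-domain argument for $\psi$. Part (2) is where you genuinely diverge. The paper realizes the C*-envelope inside the injective envelope $I(\gamma\rho(A))$ via the Choi--Effros product $x\cdot y=F(xy)$, introduces the pulled-back product $\gamma\rho(a)\circ\gamma\rho(b):=\gamma\rho(ab)$ (well defined because $\rho$ is faithful and $E\gamma\rho=\rho$), and invokes the essential uniqueness of a C*-multiplication compatible with the operator system structure to conclude that $\circ$ and $\cdot$ coincide, so $(\gamma\rho(A),\circ)$ \emph{is} the C*-envelope and $E$ is a $\ast$-isomorphism of it onto $\rho(A)$. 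You instead observe that $E$ restricts to a unital, surjective complete isometry $\gamma\rho(A)\to\rho(A)$ (via $\|\rho(a)\|=\|E\gamma\rho(a)\|\leq\|\gamma\rho(a)\|\leq\|\rho(a)\|$ and its amplifications), hence a complete order isomorphism onto a C*-algebra, which is its own C*-envelope; you then run the couniversal property of $C^{\ast}_{e}$ to identify $\ker E|_{B}$ with the Shilov ideal. Your route is shorter, avoids the injective-envelope machinery entirely, and has the added virtue of making the Shilov-ideal assertion explicit (the paper's proof establishes the $\ast$-isomorphism but leaves the identification of $\ker E|_{B}$ with the Shilov ideal as an unstated consequence of Hamana's universal property, which is exactly the step you spell out). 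Both arguments ultimately rest on the same foundational results of Hamana and Choi--Effros, so neither is more general; yours is simply the more economical packaging.
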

  
\begin{proof}\leavevmode
\begin{enumerate}
\item First, the operator system $\rho(S)$ is contained in $\gamma\rho(A)$
  since we have assumed that $\gamma\rho(s)=\rho(s)$ for all $s\in
  S$. So $\rho(A)$, the unital C*-algebra generated by
  $\rho(S)$, must be contained in $B$. Second, by Theorem~\ref{thm4},
  $E\gamma\rho=\rho$. Applying the Kadison-Schwarz inequality several
  times, we have
\begin{align*}
\rho(a^{\ast}a)=E\gamma\rho(a)^{\ast}E\gamma\rho(a) &\leq
E(\gamma\rho(a)^{\ast}\gamma\rho(a))\\ &\leq E\gamma\rho(a^{\ast}a)\\ &=\rho(a^{\ast}a),
\end{align*}
for all $a\in A$. So $E\gamma\rho(a)^{\ast}E\gamma\rho(a)=
E(\gamma\rho(a)^{\ast}\gamma\rho(a))$ for all $a\in A$, which implies
$\gamma\rho(A)\subseteq \tmop{Mult}(E)$. Because $\tmop{Mult}(E)$ is a
unital C*-algebra containing $\gamma\rho(A)$, it must also
contain $B$. Finally, the last statement is obvious: $E$ is a $\ast$-homomorphism when
restricted to $\tmop{Mult}(E)$, and its image is $\rho(A)$.

\item We may consider $\gamma\rho(A)$ as a subset of its injective envelope
in $B(K)$; that is, $\gamma\rho(A)\subseteq I(\gamma\rho(A))\subseteq
B(K)$. Let $F:B(K)\to B(K)$ be a completely positive norm 1 projection onto
$I(\gamma\rho(A))$. By (\cite{ChoiEffros77}), the map $F$ induces a multiplication on
$I(\gamma\rho(A))$: $x\cdot y := F(xy)$ for all $x,y\in
I(\gamma\rho(A))$. The C*-envelope of $\gamma\rho(A)$ is the
C*-subalgebra of $I(\gamma\rho(A))$ generated by
$\gamma\rho(A)$ in this multiplication.  Also, if we define $\gamma\rho(a)\circ
\gamma\rho(b) := \gamma\rho(ab)$ for all $a,b\in A$, it follows from
the fact that $\rho$ is faithful and $E\gamma\rho=\rho$ that this is also a
multiplication on $\gamma\rho(A)$ making it into a
C*-algebra. A multiplication on $\gamma\rho(A)$ making it
into a C*-algebra is essentially unique: there exists a
complete order isomorphism
$\alpha:
(\gamma\rho(A),\circ )\to C_{e}^{\ast}(\gamma\rho(A))$ such that $\alpha(\gamma\rho(a)\circ
\gamma\rho(b))=\gamma\rho(a)\cdot \gamma\rho(b)$ for all $a,b\in A$.  Thus we have
\begin{align*}
\alpha(\gamma\rho(ab)) &= \alpha(\gamma\rho(a)\circ
  \gamma\rho(b))\\  &=\gamma\rho(a)\cdot
  \gamma\rho(b)\\ &=F(\gamma\rho(a)\gamma\rho(b)),\qquad \forall
                    a,b\in A.
\end{align*}
When $b=1$, we get
$\alpha(\gamma\rho(a))=F\gamma\rho(a)=\gamma\rho(a)$. Thus
\begin{align*}
\gamma\rho(a)\circ \gamma\rho(b)&=\gamma\rho(ab) \\
                                &=\alpha(\gamma\rho(ab)) \\ &=
                                                              F(\gamma\rho(a)\gamma\rho(b))\\
  &=\gamma\rho(a)\cdot
\gamma\rho(b),\qquad \forall a,b\in A.
\end{align*}
In other words $\circ$ and $\cdot$ are the same multiplication. We
conclude that 
$(\gamma\rho(A),\circ)$ is the C*-envelope of $\gamma\rho(A)$. But
$E$ is a $\ast$-isomorphism of $(\gamma\rho(A),\circ)$ onto $\rho(A)$ and
the claim is proved.
\item Let $\psi$ be a ucp extension of $\sigma|_{\gamma\rho(A)}$ to
$B$ and let $J$ be $\ker E|_{B}$. Note that, by hypothesis, $\sigma=\pi q_J$, where $\pi$ is an isometric
$\ast$-homomorphism of $B/J$ onto $\sigma(B)$. Since $B/J\cong
\rho(A)$, without loss of generality, we may write $\sigma=
\pi E$. We have assumed that
$\psi(\gamma\rho(a))=\sigma(\gamma\rho(a))$, which in turn is
$\pi E(\gamma\rho(a))$, for all $a\in A$.  Because $E\gamma\rho=\rho$,
we have $\psi(\gamma\rho(a))=\pi\rho(a)$ for all $a\in A$. It is now easy
to see, with the usual multiplicative domain argument, that $\psi$ is
multiplicative on $B$: for all $a\in A$,
\begin{align*}
\pi\rho(a^{\ast}a)=\psi(\gamma\rho(a))^{\ast}\psi(\gamma\rho(a))&\leq
\psi(\gamma\rho(a)^{\ast}\gamma\rho(a)) \\
&\leq \psi\gamma\rho(a^{\ast}a) \\
&=\pi\rho(a^{\ast}a).
\end{align*}
This implies
$\psi(\gamma\rho(a))^{\ast}\psi(\gamma\rho(a))=\psi(\gamma\rho(a)^{\ast}\gamma\rho(a))$
for all $a\in A$ 
and it follows that $\psi$ is multiplicative.

\end{enumerate}
\end{proof}

\bibliographystyle{amsalpha}
\bibliography{docs}

\end{document}